\providecommand{\U}[1]{\protect\rule{.1in}{.1in}}
\numberwithin{equation}{subsection}
\newtheorem{theorem}{Theorem}
\newtheorem*{theorem-intro}{Theorem}
\newtheorem{corollary}[equation]{Corollary}
\newtheorem{lemma}[equation]{Lemma}
\newtheorem{proposition}[equation]{Proposition}
\newtheorem{remark}[equation]{Remark}
\DeclareMathOperator{\Sp}{Sp}
\DeclareMathOperator{\SL}{SL}
\DeclareMathOperator{\GL}{GL}
\DeclareMathOperator{\GSp}{GSp}
\DeclareMathOperator{\Id}{I}
\DeclareMathOperator{\ind}{Ind}
\DeclareMathOperator{\cind}{c-Ind}
\DeclareMathOperator{\infl}{Inf}
\DeclareMathOperator{\Irr}{Irr}
\DeclareMathOperator{\Res}{Res}
\DeclareMathOperator{\Hom}{Hom}
\DeclareMathOperator{\Mat}{M}
\DeclareMathOperator{\ii}{i}
\DeclareMathOperator{\Ind}{Ind}
\newcommand{\cInd}{{\mathrm{c}\!-\!\Ind}}
\DeclareMathOperator{\inv}{inv}
\newcommand{\fo}{{\mathfrak o}}
\newcommand{\fs}{{\mathfrak s}}
\newcommand{\fp}{{\mathfrak p}}
\newcommand{\cK}{{\mathcal K}}
\newcommand{\cL}{{\mathcal L}}
\newcommand{\cP}{{\mathcal P}}
\newcommand{\fA}{{\mathfrak A}}
\newcommand{\fB}{{\mathfrak B}}
\newcommand{\fP}{{\mathfrak P}}
\newcommand{\bbP}{{\mathbb P}}
\newcommand{\fR}{{\mathfrak R}}
\newcommand{\bG}{{\mathbf G}}
\newcommand{\tG}{{\widetilde{G}}}
\DeclareMathOperator{\rr}{r}
\newcommand{\heis}{{\mathrm{h}}}
\newcommand{\sieg}{{\mathrm{s}}}
\newcommand{\QQ}{{{\mathbb Q}}}
\newcommand{\Qp}{{{\mathbb Q}_p}}
\newcommand{\Zp}{{{\mathbb Z}_p}}
\newcommand{\Gal}{{\mathrm{Gal}}}
\newcommand{\Ad}{{\mathrm{Ad}}}
\newcommand{\fX}{{\mathfrak{X}}}
\newcommand{\nr}{{\mathrm{nr}}}
\title[Typical representations for $\Sp_4(F)$]{Typical representations for $\Sp_4(F)$}
\author[Anne-Marie Aubert ]{Anne-Marie Aubert}
\address{Sorbonne Universit\'e and Universit\'e Paris Cit\'e, CNRS, Institut de Math\'ematiques de Jussieu -- Paris Rive Gauche, 
F-75005 Paris, France}
\email{anne-marie.aubert@imj-prg.fr}
\author[Luis Guti\'errez Frez]{Luis Guti\'errez Frez}
\address{Instituto de Ciencias F\'\i sicas y Matem\'aticas, Universidad Austral 
de Chile, Campus Isla Teja SN, Valdivia, Chile}
\email{luis.gutierrez@uach.cl}
\date{\today}
\keywords{Representations of reductive groups over non Archimedean local fields, Bernstein Center, Bushnell-Kutzko types,
typical representations, $p$-adic symplectic groups}
\subjclass{22E50}
\begin{document}

\maketitle

\begin{abstract} Let $F$ be a non Archimedean local field with odd residual characteristic,  and let $\cK$ a hyperspecial maximal compact subgroup of the $p$-adic symplectic group $G=\Sp_4(F)$. 
Let $\fs$ be an inertial class for $G$ in the Bernstein decomposition of the category of smooth representations of $G$, which is attached to a proper Levi subgroup $L$ of $G$. 

We prove that the $\fs$-typical irreducible representations of $\cK$ are the irreducible components of  $\Ind_{J_\fs}^\cK(\lambda_\fs)$, where  $(J_\fs,\lambda_\fs)$ is an $\fs$-type for $G$ such that $J_\fs\subset \cK$, and
$(J_\fs,\lambda_\fs)$ is a $G$-cover of a Bushnell-Kutzko maximal simple type for $L$.
\end{abstract}

\tableofcontents

\section{Introduction}
Let $F$ be a non Archimedean local field, with ring of integers denoted by $\fo_F$. Let $q$  denote the cardinality of of the residue field $k_F$ of $F$, and $p$ the characteristic of $k_F$. Let $G$ be the  group of $F$-rational points of connected reductive algebraic group defined over $F$.

A \textit{supercuspidal pair} in $G$ is a pair $(L,\sigma)$,  consisting
of a Levi subgroup $L$ of a parabolic subgroup of $G$ and a smooth irreducible supercuspidal representation $\sigma$  of $L$. Given $\pi$ a smooth irreducible representation of $G$, there is a supercuspidal pair $(L,\sigma)$ such that $\pi$ is isomorphic to a  subquotient of a parabolically induced representation from $\sigma$.  The $G$-conjugacy class of $(L,\sigma)$ is uniquely determined and is called the \textit{supercuspidal suppport} of $\pi$.

The group $\fX_\nr(L)$ of unramified characters of $L$ acts by tensorisation on the set of  irreducible supercuspidal representations of $L$, and we denote by $\fs=[L,\sigma]_G$  the $G$-conjugacy class of the pair 
$(L,\fX_\nr(L)\cdot\sigma)$. Let $\fB(G)$ be the set of such $\fs$.  We call  \textit{supercuspidal} every $\fs\in\fB(G)$ such that $L=G$.  By a theorem of Bernstein \cite{Ber}, the  category $\fR(G)$ of smooth representations of $G$ decomposes as a product
\begin{equation}
\fR(G)=\prod_{\fs\in\fB(G)}\fR^\fs(G),
\end{equation} 
where $\fR^\fs(G)$ is the full subcategory consisting of  those representations every irreducible subquotient of which has its supercuspidal suppport in $\fs$. We denote by $\Irr^\fs(G)$ the set of irreducible objects in $\fR^\fs(G)$.

\smallskip

Let $(J, \lambda)$ be a pair consisting of a compact open subgroup $J$ of $G$ and a smooth irreducible representation $\lambda$ of  $J$. 
The pair $(J, \lambda)$ (or simply the representation $\lambda$)  is called \textit{$\fs$-typical}, a notion introduced by Henniart in \cite{He},  if every smooth irreducible representation $\pi$ of $G$ such that $\Hom_J(\lambda,\pi)\ne \{0\}$ is in $\Irr^\fs(G)$.

Following Bushnell-Kutzko's terminology in \cite{BKtyp}, we say that the pair $(J, \lambda)$ (or simply the representation $\lambda$) is an \textit{$\fs$-type} for $G$, if it is $\fs$-typical, and if every representation $\pi$ in $\Irr^\fs(G)$ satisfies $\Hom_J(\lambda,\pi)\ne \{0\}$.  If  $(J, \lambda)$ is an $\fs$-type, then, by \cite[Theorem~4.3]{BKtyp},  the category $\fR^\fs(G)$ is canonically equivalent to the category of (left) modules over the convolution
algebra of compactly supported $\lambda$-spherical functions on $G$.

For $G=\GL_N(F)$ (resp. $G=\SL_2(F)$, with $p$ odd), and $\fs$ supercuspidal, every $\fs$-typical representation of $\GL_N(\fo_F)$ (resp. $\SL_2(\fo_F)$) is actually an $\fs$-type, and is induced from a maximal simple type for $G$, see \cite{He,Pa} (resp. \cite{La1}).

For $G$ an arbitrary $p$-adic group, a representation of $G$ is said to have \textit{depth zero} if it has non-zero vectors that are invariant under the pro-$p$ unipotent radical of a parahoric subgroup of $G$. If $\sigma$ has depth zero, then every representation of $G$ in the orbit $\fX_\nr(L)\cdot\sigma$ has also depth-zero. In this case, we say that $\fs$ has depth-zero. Let $x$ be a vertex in the Bruhat-Tits building of $G$, let $G_{x,0}$ be the parahoric subgroup of $G$ associated to $x$, and let $G_x$ be the maximal compact subgroup of the $G$-normalizer $\widetilde{G}_x$ of $G_{x,0}$. Then $G_x$ is a maximal compact subgroup of $G$ which contains $G_{x,0}$ as a normal subgroup of finite index, and every depth-zero supercuspidal irreducible representation of $G$ is compactly induced from 
the extension to $\widetilde{G}_x$ of the inflation of a cuspidal irreducible representation of the reductive quotient of $G_x$, see \cite{Mor0} or \cite{MP}. If $\fs\in\fB(G)$ is  supercuspidal and has depth zero, then  again every $\fs$-typical representation of $G_x$ is an 
$\fs$-type, see \cite{La2}.

\smallskip

However, for non-supercuspidal $\fs$, in general there exist $\fs$-typical representations that are not $\fs$-types. For $G=\GL_N(F)$, and $\fs\in\fB(G)$ arbitrary, a complete classification of the $\fs$-typical representations of $\GL_N(\fo_F)$, was obtained for $N=2$ in \cite{He}, and for $N=3$, with $q>3$, in \cite{Na2}.  

Let $\cK$ be a maximal compact subgroup of $G$.  If $(J_\fs,\lambda_\fs)$ is an $\fs$-type such that $J_\fs\subset \cK$, then, by Frobenius reciprocity, every irreducible component of the induced representation $\Ind_{J_\fs}^\cK(\lambda_\fs)$ is $\fs$-typical. 
It is expected that all the $\fs$-typical irreducible representation of $\cK$ are obtained in this way. For $\GL_2(F)$ and $\GL_3(F)$, it follows from the classication obtained in \cite{He} and \cite{Na2}. It was also established for $\cK=\GL_N(\fo_F)$
 for all the depth-zero representations of $\GL_n(F)$ in \cite{Na1}, and of  a split classical group $G$, with $q>5$ and $\cK$ a hyperspecial maximal compact subgroup of $G$, as well as for some positive-depth representations in the principal series of $G$ in \cite{MN}.

For non-supercuspidal $\fs$, the method of $G$-covers, introduced and developed in \cite[{\S}8]{BKtyp} (that we will recall in {\S}\ref{sec:cover}), provides a very useful approach to construct $\fs$-types from $\fs_M$-types of Levi subgroups $M$ of $G$
containing $L$.

\medskip

In this article, we focus on the case where $G$ is the symplectic group $\Sp_4(F)$, and prove the following result.

\begin{theorem-intro} \label{thm:typical}  {\rm (See Theorem~\ref{maintheorem})}
Let $\cK$ be a hyperspecial maximal compact subgroup of $G=\Sp_4(F)$, and let $\fs=[L,\sigma]_G\in\fB(G)$, with $L\ne G$. We suppose $p$ odd.

The $\fs$-typical irreducible representations of $\cK$ are the irreducible components of  $\Ind_{J_\fs}^\cK(\lambda_\fs)$, where $(J_\fs,\lambda_\fs)$ is a $G$-cover of a Bushnell-Kutzko maximal simple type for $L$ such that $J_\fs\subset \cK$. In particular, $(J_\fs,\lambda_\fs)$ is an $\fs$-type for $G$.
\end{theorem-intro}

Several of the arguments we use  in the proof of the theorem above should extend to more general groups, and we plan to explore this in future works.

\bigskip

\noindent
\textsc{Notation}.

 For $G$ a group, $g$ and $x$ two elements of $G$, and $X$ a subset of $G$, we write ${}^gx:=gxg^{-1}$ and ${}^gX:=\{{}^gx\,:\, x\in X\}$. 
 Given a representation $\lambda$ of a closed subgroup $J$ of $G$, we denote by ${}^g\lambda$ the representation of ${}^gJ$ defined by ${}^g\lambda(gjg^{-1}):=\lambda(j)$, for any $j\in J$.

\section{Preliminaries on $\Sp_4(F)$}
Let $F$ be a non Archimedean local field, with ring of integers denoted by $\fo_F$, and let $\varpi_F$ be a uniformizer of $F$. We denote by $\fp_F:=\varpi_F\fo_F$ the maximal ideal of $\fo_F$, and by $k_F:=\fo_F/\fp_F$ the residue field of $F$.
From now on,  we assume that the characteristic $p$ of $k_F$ is odd. Let $\bG$  be the algebraic group $\Sp_4$ and let $G=\Sp_4(F)$ be the group of $F$-rational points of $\bG$. 
We realize $G$ as the subgroup of $\tG:=\GL_4(F)$ which consists of elements preserving the non-degenerate alternated form $\langle\;,\;\rangle$ defined by $\langle x,y\rangle:=x H {}^t y$, where 
\begin{equation}H=\left(\begin{smallmatrix} 
0 & 0 &\phantom{-}0 & -1\cr
0 & 0 &-1 &\phantom{-}0\cr
0 & 1 &\phantom{-}0& \phantom{-}0\cr
1 & 0 & \phantom{-}0& \phantom{-}0
\end{smallmatrix}\right).
\end{equation}
In other words, $G=\Sp(V)$, where the $F$-vector space $V=F^4$ is equipped with symplectic form $\langle\;,\;\rangle$.

We denote by $T$ the diagonal maximal torus of $G$:
\[T:=\left\{\left(\begin{smallmatrix} a&0&0&0\cr 
0&c&0&0\cr
0&0&\phantom{-}c^{-1}&0\cr
0&0&0&\phantom{-}a^{-1}\end{smallmatrix}\right)\;:\, a,c\in F^\times\right\},\]
and $B=TU$ be the standard Borel subgroup.

Let $L\supseteq T$ be a Levi subgroup of a parabolic subgroup $P\supseteq B$ of $G$. Then the possibilities for the pair $(L,P)$ are $(T,B)$, $(L_\sieg,P_\sieg)$, $(L_\heis,P_\heis)$, and $(G,G)$,
where $P_\sieg$ is the Siegel parabolic subgroup of $G$:
\begin{align*}
P_\sieg&=\left\{\left(\begin{smallmatrix} a&b\cr 0& \phantom{-}{}^ta^{-1}\end{smallmatrix}\right)\;:\, a\in \GL_2(F),b\in \Mat_2(F)\right\}\cap G,\\
L_\sieg&=\left\{\left(\begin{smallmatrix} a&0\cr 0& \phantom{-}{}^ta^{-1}\end{smallmatrix}\right)\;:\, a\in \GL_2(F)\right\}\simeq\GL_2(F),
\end{align*}
and $P_\heis$ is the Heisenberg parabolic subgroup:
\[P_\heis=\left\{\left(\begin{smallmatrix} a&b&c\cr 0&A&d\cr 0&0&\phantom{-}a^{-1}\end{smallmatrix}\right)\;:\, a\in F^\times,\;
A\in\SL_2(F),b,c,d\in F\right\}\cap G,\]
\[L_\heis=\left\{\left(\begin{smallmatrix} a&0&0\cr 0&A&0\cr 0&0& \phantom{-}a^{-1}\end{smallmatrix}\right)\;:\, a\in F^\times,A\in \SL_2(F)\right\}\simeq F^\times\times\SL_2(F).\]

\subsection{Lattices and parahoric subgroups} \label{subsec:lat}

For $\cL$ an $\fo_F$-lattice in $V$, we denote by $\cL^\natural$ the dual lattice
\[\cL^\natural:=\left\{v\in \cL\,:\,\langle v,L\rangle\subset \fp_\cL\right\}.\]
The lattice $\cL$ is called almost self-dual if $\cL\supset \cL^\natural \supset\fp_F \cL$. 
If $\cL$ is almost self-dual, its stabilizer $K_\cL$ in $G$ is a maximal compact subgroup of $G$, and we will
denote by $K_\cL^1$ the pro-$p$ unipotent radical of $K_\cL$, that is the subgroup consisting of those element $k\in K_\cL$ which induce the identity map on the $k_F$-vector spaces $\cL/\cL^\natural$ and $\cL^\natural/\fp_F \cL$.
The form $\langle\;,\;\rangle$ induces non-degenerate alternated forms on $\cL/\cL^\natural$ and $\cL^\natural/\fp_F \cL$ by setting
\begin{align*}
\langle v_1+\cL^\natural,v_2+\cL^\natural\rangle&:=\langle v_1,v_2\rangle+\fp_F,\quad\quad \quad\: \:\text{ for } v_1,v_2\in \cL\\
\langle w_1+\fp_F \cL,w_2+\fp_F \cL\rangle&:=\varpi_F^{-1}\langle w_1,w_2\rangle+\fp_F,\quad\text{ for } w_1,w_2\in \cL^\natural
\end{align*}
We observe that the quotient $\bar K_\cL:=K_\cL/K^1_\cL$ is isomorphic to the finite group 
$\Sp(\cL/\cL^\natural)\times\Sp(\cL^\natural/\fp_F \cL)$, and
$K_\cL$ is a (maximal) parahoric subgroup of $G$, it is equal to its normalizer in $G$.

Let $(e_i)_{1\le i\le 4}$ be the standard basis of $V$, that is, $e_1=(1,0,0,0)$, $e_2=(0,1,0,0)$,
$e_3=(0,0,1,0)$ and $e_4=(0,0,0,1)$. We will take for $\cL$  the following almost self-dual $\fo_F$-lattice in $V$:
\[\cL:=\fo_F e_1\oplus  \fo_F e_2 \oplus \fo_F e_3 \oplus \fo_F e_4,\]
and set  $K:=K_{\cL}=\Sp_4(\fo_F)$.

Let  $u:=\left(\begin{smallmatrix} 0& \Id_2\cr
\varpi_F \Id_2& 0\end{smallmatrix}\right)\in\GL_4(F)$. The element $u$ does not belong to $\Sp_4(k_F)$. We set  $K':=u^{-1}K u$. We have
\begin{equation}
K'=\left(\begin{smallmatrix} 
\fo_F & \fo_F &\fp^{-1}_F & \fp^{-1}_F\cr
\fo_F & \fo_F &\fp^{-1}_F & \fp^{-1}_F\cr
\fp_F & \fp_F &\fo_F & \fo_F\cr
\fp_F & \fp_F &\fo_F& \fo_F
\end{smallmatrix}\right)\cap G.
\end{equation}
The groups $K$ and $K'$ are hyperspecial maximal compact subgroups of $G$. Their reductive quotients  are both isomorphic to $\Sp_4(k_F)$. 

\smallskip

Let $B$  be the standard Borel subgroup, and $I$ the standard Iwahori subgroup of $G$. 
We recall the Iwasawa decompositions \cite[(4.4.1), (4.4.6)]{BT1}:
\begin{equation} \label{eqn:Iwasawa dec}
G=BK=BK'.
\end{equation}
We set 
\begin{equation}
s_0=\left(\begin{smallmatrix} \phantom{-}0&\phantom{-}0&0&\phantom{-}\varpi_F^{-1}\\\phantom{-}0&\phantom{-}0&1&0\\\phantom{-}0&-1&0&0\\-\varpi_F&\phantom{-}0&0&0\end{smallmatrix}\right),\;s_1:=\left(\begin{smallmatrix} 0&1&0&0\cr 
1&0&0&0\cr
0&0&0&1\cr
0&0&1&0\end{smallmatrix}\right)\quad\text{ and }
\quad
s_2=\left(\begin{smallmatrix}1&\phantom{-}0&0&0\\0&\phantom{-}0&1&0\\0&-1&0&0\\0&\phantom{-}0&0&1\end{smallmatrix}\right).
\end{equation}
The standard parahoric subgroups of $G=\Sp_4(F)$ correspond bijectively to proper subsets of $\{s_0,s_1,s_2\}$. If $S'\subset\{s_0,s_1,s_2\}$, then the corresponding standard parahoric subgroup $\cP_{S'}$ is the group $\langle I, S'\rangle$. If $S'\subset\{s_1,s_2\}$, then $\cP_{S'}$ is the inverse image of $\bbP_{S'}(k_F)$ in $\Sp_4(\fo_F)$. The other standard parahoric subgroups of $G$, that is, the ones of the form $\cP_{S'}$ where $s_0\in S'$, do not lie inside $\Sp_4(\fo_F)$.
Hence, in addition to the maximal parahoric subgroups and the Iwahori subgroup $I$, we have two intermediate parahoric subgroups contained in $\Sp_4(\fo_F)$: the Siegel parahoric subgroup
\begin{equation} \label{eqn:Sparahoric}
\cP_\sieg:=\left\{\left(\begin{smallmatrix} a&b\cr \fp_F& \phantom{-}{}^ta^{-1}\end{smallmatrix}\right)\;:\, a\in \GL_2(\fo_F),b\in \Mat_2(\fo_F)\right\}\cap G, 
\end{equation}
which is the inverse image of the Siegel parabolic subgroup of $\Sp_4(k_F)$, and the Heisenberg parahoric subgroup
\begin{equation} \label{eqn:Hparahoric}
\cP_\heis:=\left\{\left(\begin{smallmatrix}  a&b&c\cr \fp_F&A&d\cr \fp_F&\fp_F&a^{-1}\end{smallmatrix}\right)\;:\, a\in \fo_F^\times,\;
A\in\SL_2(\fo_F),b,c,d\in \fo_F\right\}\cap G,
\end{equation}
which is the inverse image of the Heisenberg parabolic subgroup of $\Sp_4(k_F)$.

\subsection{Parahoric restriction for $\Sp_4(F)$} \label{subsec: par_res}
Let $\cP$ be a parahoric subgroup of $G$, and let $\cP_+$ and $\overline\cP$ denote the pro-$p$-unipotent radical and the reductive quotient of $\cP$, respectively. The sequence
\begin{equation} \label{eqn:parahoric}
1\to \cP_+\to \cP\to \overline\cP\to 1
\end{equation}
is exact, and $\overline\cP$ is the group of $k_F$-rational points of a connected reductive algebraic group defined over $k_F$. 

We denote by $\fR(\overline\cP)$ the category of representations of $\overline\cP$, and by
\begin{equation}
\infl_{\overline\cP}\colon\fR(\overline\cP)\to \fR(\cP)
\end{equation} 
the inflation functor along the projection $\cP\to\overline\cP$.  Its right adjoint  is the functor 
$\inv^{\cP_+}\colon\cP\to\overline\cP$ that sends  a representation $(\tau,V)$  to $(\tau,V^{\cP_+})$, where $V^{\cP_+}$ is the space of $\cP_+$-fixed vectors in $V$. 
Then $\inv^{\cP_+}$ defines an exact functor from $\fR(\cP)$ to $\fR(\overline\cP)$.

The {\em parahoric induction} functor for $\cP$ is defined to be
\begin{equation} \label{eqn:parahoric induction}
\ii^G_{\overline\cP}:=\cind_\cP^G\circ \infl_{\overline\cP}\colon\fR(\overline\cP)\to\fR(G).
\end{equation}
Its right adjoint is the {\em parahoric restriction} functor for $\cP$:
\begin{equation} \label{eqn:res_par}
\rr^G_{\overline\cP}:=\Res^G_\cP\circ \inv^{\cP_+}\colon\fR(G)\to\fR(\overline\cP).
\end{equation}

Let  $\cK\in\{K,K' \}$ and let $P$ be a parabolic subgroup of $G$,  with Levi subgroup $L$.  We set $\cK_L:=\cK\cap L$. Then $\cK_L$ is a parahoric subgroup of $L$. We write $\cK_P:=\cK\cap P$ and $\cK_U:=\cK\cap U$, where $U$ is the unipotent radical subgroup  of $P$. 
Thanks to $\cK_L\cap \cK_U=\{1\}$ we have that $\cK_P=\cK_L\cK_U$. Thereby we have an exact sequence
\begin{equation} \label{eqn:parahoric2}
1\to \cK_U\to \cK_P\to \cK_L\to 1.
\end{equation}
We denote by $\infl_{\cK_L}^{\cK_P}$ the natural functor from $\fR(\cK_L)$ to $\fR(\cK_P)$ along the projection described above.

\begin{proposition} \label{Siegel_K_L}  
The  following diagram of functors
\begin{equation}
\xymatrix{\fR(L) \ar^{\Res^L_{\cK_L}}[r] \ar_{\ii_{L,P}^G}[d] & \fR(\cK_L) \ar^{\Ind_{\cK_P}^{\cK}\circ\infl_{\cK_L}^{\cK_P}}[d] \\
\fR(G) \ar^{\Res^G_{\cK}}[r]  & \fR(\cK)}
\end{equation}
is commutative.
\end{proposition}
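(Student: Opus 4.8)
\emph{Proof strategy.} The plan is to recognise this as the standard Mackey description of parabolic induction restricted to a maximal compact subgroup, whose only inputs are the Iwasawa decomposition $G=P\cK$ from \eqref{eqn:Iwasawa dec} and the elementary identity $P\cap\cK=\cK_P$ (which is just the definition of $\cK_P$). First I would split $\ii_{L,P}^G$ as $\Ind_P^G\circ\infl_L^P$, where $\infl_L^P$ inflates a smooth representation of $L$ to $P$ along $P\twoheadrightarrow L$; since the coset space $P\backslash G$ is compact (being the set of $F$-points of a projective $F$-variety), $\Ind_P^G$ coincides with $\cind_P^G$, so the choice between induction and compact induction is immaterial. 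If $\ii_{L,P}^G$ is taken to be \emph{normalised} parabolic induction there is an additional twist by $\delta_P^{1/2}$, but $\delta_P$ is trivial on the compact group $\cK_P$ (its values lie in the unique maximal compact subgroup $\{1\}$ of $\RR_{>0}$), so that twist disappears upon applying $\Res^P_{\cK_P}$ and the statement is insensitive to the normalisation convention. Combining this with the exact sequence \eqref{eqn:parahoric2}, one obtains a canonical identification $\Res^P_{\cK_P}\circ\infl_L^P\cong\infl_{\cK_L}^{\cK_P}\circ\Res^L_{\cK_L}$, so the proposition reduces to an isomorphism of functors $\fR(P)\to\fR(\cK)$
\[\Res^G_\cK\circ\Ind_P^G\;\cong\;\Ind_{\cK_P}^\cK\circ\Res^P_{\cK_P},\]
natural in the $P$-representation, for each $\cK\in\{K,K'\}$.

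For this I would argue directly on models. Realising $\Ind_P^G(\sigma)$ as the space of smooth maps $f\colon G\to V_\sigma$ with $f(pg)=\sigma(p)f(g)$, the restriction map $f\mapsto f|_\cK$ takes values in $\Ind_{\cK_P}^\cK(\Res^P_{\cK_P}\sigma)$, is $\cK$-equivariant, and is manifestly natural in $\sigma$. It is injective because $G=P\cK$ forces $f$ to be determined by $f|_\cK$. It is surjective because, given $h$ in the target, the formula $f(pk):=\sigma(p)h(k)$ for $p\in P$, $k\in\cK$ is well defined: if $p_1k_1=p_2k_2$ then $p_2^{-1}p_1=k_2k_1^{-1}\in P\cap\cK=\cK_P$, and $\cK_P$-equivariance of $h$ gives $\sigma(p_1)h(k_1)=\sigma(p_2)h(k_2)$; one then checks that $f$ is right-invariant under a sufficiently small open compact subgroup of $G$ (using that $h$ is uniformly locally constant on the compact group $\cK$), so that $f\in\Ind_P^G(\sigma)$ with $f|_\cK=h$. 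This produces the desired natural isomorphism, which together with the previous paragraph proves the commutativity of the diagram.

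I do not expect a genuine obstacle here: the entire content is the ``single double coset'' case of Mackey's formula, and everything reduces to the two facts $G=P\cK$ and $P\cap\cK=\cK_P$. The only points requiring a little care are the triviality of the normalising modulus character on $\cK_P$ (so that the statement does not depend on the induction convention) and the smoothness of the function $f$ built in the surjectivity step; both are routine. One could alternatively phrase the whole argument via $\cK$-equivariance of the bijection $\cK_P\backslash\cK\xrightarrow{\sim}P\backslash G$ induced by the inclusion $\cK\hookrightarrow G$, but the explicit model computation above is the most transparent route.
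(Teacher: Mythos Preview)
Your proposal is correct and follows essentially the same approach as the paper's proof: both write out the function-space models for parabolic induction and compact induction, and use the Iwasawa decomposition $G=P\cK$ together with $P\cap\cK=\cK_P$ to identify $\Res^G_\cK\circ\ii_{L,P}^G$ with $\Ind_{\cK_P}^\cK\circ\infl_{\cK_L}^{\cK_P}\circ\Res^L_{\cK_L}$ via restriction of functions. Your version is somewhat more careful in that you explicitly address the modulus character, verify surjectivity of the restriction map, and comment on smoothness, whereas the paper simply writes down both sides for an irreducible $\sigma$ and observes they coincide; but the underlying argument is the same.
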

\begin{proof}
Let $(\sigma, V_{\sigma})$ be an irreducible  smooth representation of $L$. We consider first  
$\Ind_{\cK_P}^{\cK}\circ\infl_{\cK_L}\circ \Res^L_{\cK_L}$. By definition, we have
\begin{align*}
&\left(\Ind_{\cK_P}^{\cK}\circ\infl_{\cK_L}^{\cK_P}\circ \Res^L_{\cK_L}\right)(\sigma)\\
&=\{f\colon\cK\to V_{\sigma}:f(pk)=\left(\infl_{\cK_L}^{\cK_P}\circ \Res^L_{\cK_L}\right)(\sigma)(p)f(k), \;p=\ell u\in \cK_{P},\;  \;k\in \cK  \}\\
&=\{f\colon\cK\to V_{\sigma}:f(\ell uk)=\sigma(\ell)f(k), \;\ell \in \cK_{L},\;u\in U,\;  \;k\in \cK  \}
\end{align*}
endowed with the right action by $\cK$. \\
We consider now  $\Res^G_{\cK}\circ \ii_{L,P}^G$.  The  induced representation $\ii_{L,P}^G(\sigma) $ is given by 
\begin{eqnarray*}
\ii_{L,P}^G(\sigma) &=&\{f\colon G\to V_{\sigma}:f(pg)=\sigma(p)f(g), \;p\in P,  \;g\in G  \}
\end{eqnarray*}
endowed with right action by $G$. Since $G=P\cK$ by \eqref{eqn:Iwasawa dec}, we see that each $f\in V$ is completely determined by its restriction to $\cK$. Thus we can conclude that $(\Res^G_{\cK}\circ \ii_{L,P}^G)(\sigma)$ is simply 
\begin{eqnarray*}
\left\{f\colon \cK\to V_{\sigma}\,:\, f(\ell uk)=\sigma(\ell)f(k), \;\ell\in \cK_{L},\;u\in U,\; \; k\in \cK\right  \}
\end{eqnarray*}
 with the right action by $\cK$.  This implies 
 \[
\Ind_{\cK_P}^{\cK}\circ\infl^{\cK_P}_{\cK_L}\circ \Res^L_{\cK_L} =\Res^G_{\cK}\circ \ii_{L,P}^G,
 \]
 and the  result follows.
\end{proof}

We set 
\begin{equation}
\cK_{P,+}:=\cK_+\cap P\quad\text{and}\quad  \overline{\cK}_P:=\cK_P/ \cK_{P,+}.
\end{equation}
The group $\overline\cK_P$ is a parabolic subgroup of $\overline\cK$ with Levi factor $\overline{\cK}_L$, and we denote by 
\begin{equation}  
\ii_{\overline{\cK}_L}^{\overline\cK}\colon \fR(\overline{\cK}_L)\to\fR(\overline\cK)
\end{equation}
the corresponding parabolic induction functor. 
We notice that
\begin{equation} \label{eqn:red-qt}
\overline{\cK}_L=\begin{cases}
\overline I\simeq k_F^{\times}\times k_F^{\times}, &\text{if $L=T$,}\, \; \\ \overline \cP_\sieg\simeq\GL_2(k_F), &\text{if $L=L_\sieg$,}\, \; \\ \overline \cP_\heis\simeq k_F^{\times}\times \SL_2(k_F), &\text{if $L=L_\heis$.}\end{cases}
\end{equation}

\begin{proposition} \label{Inv_K_L}
The following diagram of functors
\begin{equation} \label{eqn:Inv_K_L}
\xymatrix{
\fR(\cK_L) \ar^{\inv_{  \cK_{L,+}}}[r] \ar_{\Ind_{\cK_P}^{\cK}\circ\infl_{\cK_L}^{\cK_P}}[d] & \fR(\overline{\cK}_L) \ar^{\ii_{\overline{\cK}_L}^{\overline{\cK}}}[d] \\
\fR(\cK) \ar^{\inv_{\cK^+}}[r]  & \fR(\overline{\cK})}
\end{equation}
commutes.
\end{proposition}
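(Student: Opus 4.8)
The plan is to show that the two composite functors $\ii_{\overline{\cK}_L}^{\overline{\cK}}\circ\inv_{\cK_{L,+}}$ and $\inv_{\cK^+}\circ(\Ind_{\cK_P}^{\cK}\circ\infl_{\cK_L}^{\cK_P})$ agree on objects and morphisms, working functorially. Since all functors in sight are additive and exact, and the inflation/invariants functors are mutually right-adjoint, it suffices to compare them on an arbitrary smooth representation $(\rho, W)$ of $\cK_L$ and track the identifications explicitly. First I would unwind the bottom-left composite: by the description of $\Ind_{\cK_P}^{\cK}\circ\infl_{\cK_L}^{\cK_P}$ used in the proof of Proposition~\ref{Siegel_K_L}, its value on $\rho$ is the space of functions $f\colon\cK\to W$ with $f(\ell u k)=\rho(\ell)f(k)$ for $\ell\in\cK_L$, $u\in\cK_U$, $k\in\cK$; then applying $\inv_{\cK^+}$ cuts this down to the functions that are moreover right-invariant under $\cK^+$, i.e.\ $f(\ell u k x)=\rho(\ell)f(k)$ for $x\in\cK^+$.

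Next I would unwind the top-right composite. Applying $\inv_{\cK_{L,+}}$ to $\rho$ gives the representation of $\overline{\cK}_L$ on $W^{\cK_{L,+}}$, and then $\ii_{\overline{\cK}_L}^{\overline{\cK}}$, being parabolic induction for the finite reductive group $\overline{\cK}$ with respect to the parabolic $\overline{\cK}_P$ with Levi $\overline{\cK}_L$, produces the space of functions $\bar f\colon\overline{\cK}\to W^{\cK_{L,+}}$ transforming on the left under $\overline{\cK}_P=\overline{\cK}_L\,\overline{\cK}_U$ via the (inflated) action of $\overline{\cK}_L$ and trivially under $\overline{\cK}_U$. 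The key point is then to produce a natural isomorphism between these two spaces. A function $f$ in the bottom-left composite is left-invariant under $\cK_U$ and right-invariant under $\cK^+\supseteq\cK_{P,+}\supseteq\cK_{U}\cap\cK^+$, and its values lie in $W$; I claim that right $\cK^+$-invariance together with left $\cK_U$-invariance forces $f$ to factor through $\overline{\cK}$ on the argument and, via the transformation law under $\cK_L$, to take values in $W^{\cK_{L,+}}$ (because $\cK_{L,+}\subset\cK^+$ and one can push a $\cK_{L,+}$-element from the right to a $\cK_{L,+}$-element on the left through $\cK_P$). Conversely, inflating $\bar f$ along $\cK\twoheadrightarrow\overline{\cK}$ and along $W^{\cK_{L,+}}\hookrightarrow W$ and checking the Bruhat-type decomposition $\cK=\cK_P\cK^+\cdots$ (concretely, $\cK=\bigsqcup \cK_P \dot w \cK^+$ lifted from the finite Bruhat decomposition, or more simply $\cK_P\backslash\cK/\cK^+\cong\overline{\cK}_P\backslash\overline{\cK}$) gives a well-defined element of the bottom-left composite. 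These two maps are mutually inverse and natural in $\rho$, which yields the commuting square.

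Concretely, the cleanest route is to use the three explicit cases of \eqref{eqn:red-qt}: for $L=T$ one has $\overline{\cK}_L=\overline I$ a maximal torus, for $L=L_\sieg$ one has $\overline{\cK}_P=\overline{\cP}_\sieg$ the Siegel parabolic of $\Sp_4(k_F)$ with Levi $\GL_2(k_F)$, and for $L=L_\heis$ one has $\overline{\cK}_P=\overline{\cP}_\heis$ the Heisenberg parabolic with Levi $k_F^\times\times\SL_2(k_F)$. In each case one checks directly that $\cK_P=\cK_L\cK_U$, that $\cK_{P,+}=\cK_{L,+}\cK_U$ (so that $\cK_U\subset\cK^+$), and that the projection $\cK\to\overline{\cK}$ carries $\cK_P$ onto $\overline{\cK}_P$ with kernel inside $\cK^+$; these are exactly the compatibilities needed to match the transformation laws, and they are immediate from the matrix descriptions \eqref{eqn:Sparahoric} and \eqref{eqn:Hparahoric}. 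Finally, since both composites are additive and one checks the isomorphism is compatible with morphisms of representations of $\cK_L$, naturality is automatic.

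The main obstacle I anticipate is the bookkeeping in the isomorphism of function spaces: one must verify that right $\cK^+$-invariance of $f$ does not merely make $f$ factor through $\cK/\cK^+$ as a set-map but is compatible with the left $\cK_P$-transformation law in a way that lands the values in $W^{\cK_{L,+}}$ — i.e.\ one needs $\cK^+\cap\cK_P$ to surject onto $\cK_{L,+}$ modulo $\cK_U$, equivalently $\cK^+\cap\cK_P=\cK_{L,+}\cK_U=\cK_{P,+}$. This is true but must be extracted from the lattice/matrix descriptions above, and it is the one place where the specific structure of $\Sp_4$ (as opposed to a formal diagram chase) genuinely enters. Everything else — exactness, additivity, and the adjunction identities — is formal and can be invoked from \S\ref{subsec: par_res}.
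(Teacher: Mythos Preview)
Your approach is essentially the same as the paper's: both unwind the two composites as explicit function spaces and exhibit the isomorphism $f\mapsto\bar f$, $\bar f(\bar x):=f(x)$, between them. You are in fact more careful than the paper about the point that the values of $f$ must lie in $W^{\cK_{L,+}}$ (which the paper glosses over by writing the target as $V_\tau$), and the structural identity $\cK^+\cap\cK_P=\cK_{L,+}\cK_U$ you isolate is precisely what makes the map well-defined; the case-by-case verification via \eqref{eqn:red-qt} is unnecessary but harmless.
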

\begin{proof}
Let $(\tau,V_{\tau})\in \fR(\cK_L)$, and we compute firstly the composition
 $\ii_{\overline{\cK}_L}^{\overline\cK}\circ \inv_{  \cK_{L,+}}$. We observe that $\inv_{  \cK_{L,+}}(\tau)(\bar k)=\tau(k)$, for 
$\bar k\in \cK/  \cK_{L,+}$, by definition.  Let $\overline\cK_P=\overline\cK_L\overline\cK_U$  denote
the Levi decomposition of $\overline\cK_P$. By applying the functor $\ii_{\overline{\cK}_L}^{\overline\cK}$, we obtain
\begin{align*}
&\ii_{\overline{\cK}_L}^{\overline{\cK}}(\inv_{\cK_{L,+}}(\tau))\\
&=\left\{\bar f\colon\overline{\cK}\to V_{\tau}\colon\bar f(\bar p \bar x)=\infl_{\overline{\cK}_L}^{\overline{\cK}_P}
(\inv_{\overline{\cK}_L^+}(\tau))(\bar p)f( \bar x), \;\bar p\in\overline\cK_P, \;\bar x\in\overline\cK \right\}
\\
&=\left\{ \bar f\colon\overline\cK\to V_{\tau}\,:\,\bar f(\bar\ell   \bar u \bar x)=\inv_{  \cK_{L,+}}(\tau)
(\bar\ell)f( \bar x), \; \overline\ell  \in\overline\cK_L, \;\bar u\in\overline{\cK}_U, \;\bar x\in\overline{\cK} \right\}
\\
&=\left\{ \bar f\colon\overline\cK\to V_{\tau}:\bar f(\bar p \bar x)=\tau(\ell)f( \bar x), \;\bar\ell\in\overline{\cK}_L,\bar u\in  \overline\cK_U, \;
\bar x\in\overline{\cK} \right\}.
\end{align*}
On the other hand, we see that $\Ind_{\cK_P}^{\cK}\left(\infl_{\cK_L}^{\cK_P}(\tau)\right)$ is 
\[
\left\{f\colon\cK\to V_{\tau}:f(px)=\infl_{\cK_L}^{\cK_P}(\tau)(p)f(x),\:p\in\cK_P,\;x\in \cK \right\}.
\]
Thus the subspace of $\cK^+$-fixed vectors $\inv_{\cK^+}\left(\Ind_{\cK_P}^{\cK}\left(\infl_{\cK_L}^{\cK_P}(\tau)\right)\right)$ is given by
\[
\left\{f\colon\cK\to V_{\tau}\colon f(\ell uxk^+)=\tau(\ell)f (x),\:\ell u\in\cK_P,\;x\in \cK,\;k^+\in \cK^+ \right\}.
\]
Then the map  from $\inv_{\cK^+}\left(\Ind_{\cK_P}^{\cK}\left(\infl_{\cK_L}^{\cK_P}(\tau)\right)\right)$ to 
$\ii_{\overline{\cK}_L}^{\overline\cK}(\inv_{\cK^+}(\tau))$ given by $f\mapsto \bar f$ defined by $\bar f(\bar x):=f(x)$
is an isomorphic of representations. The result then follows from this.
\end{proof}

\begin{corollary} \label{prop_res_Siegel}
The following diagrams of functors
\[
\xymatrix{
\fR(L) \ar^{\rr^L_{\overline{\cK}_L}}[r] \ar_{\ii_{L,P}^G}[d] & \fR(\overline{\cK}_L) \ar^{\ii_{\overline{\cK}_L}^{\overline\cK}}[d] \\
\fR(G) \ar^{\rr^G_{\overline\cK}}[r]  & \fR(\overline{\cK})}\quad\quad\text{and}\quad\quad
\xymatrix{
\fR(\overline{\cK}) \ar^{\ii^G_{\overline\cK}}[r] \ar_{\rr^{\overline\cK}_{\overline{\cK}_L}}[d] & \fR(G) \ar^{\rr_{L,P}^G}[d] \\
\fR(\overline{\cK}_L) \ar^{\ii^L_{\overline{\cK}_L}}[r]  & \fR(L)}
\]
are commutative.
\end{corollary}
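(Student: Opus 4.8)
The plan is to deduce both squares from Proposition \ref{Siegel_K_L} and Proposition \ref{Inv_K_L} by "pasting" the two commutative diagrams and, for the second square, by passing to adjoints. For the first square, recall that by definition $\rr^G_{\overline\cK}=\inv_{\cK^+}\circ\Res^G_\cK$ and $\rr^L_{\overline\cK_L}=\inv_{\cK_{L,+}}\circ\Res^L_{\cK_L}$. Stacking the square of Proposition \ref{Siegel_K_L} on top of the square of Proposition \ref{Inv_K_L}, the left column becomes $\ii^G_{L,P}$, the right column becomes $\ii^{\overline\cK}_{\overline\cK_L}\circ\inv_{\cK_{L,+}}\circ\Res^L_{\cK_L}=\ii^{\overline\cK}_{\overline\cK_L}\circ\rr^L_{\overline\cK_L}$, the top row is $\inv_{\cK^+}\circ\Res^G_\cK=\rr^G_{\overline\cK}$, and the bottom row is $\inv_{\cK^+}$; since both inner squares commute, so does the outer rectangle, which is exactly the first diagram of the Corollary (read with $\fR(L)$ in the top-left, $\fR(G)$ in the bottom-left).

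For the second square, I would obtain it from the first by taking right adjoints of all four functors. Indeed, $\ii^G_{L,P}$ is left adjoint to $\rr^G_{L,P}$, $\ii^{\overline\cK}_{\overline\cK_L}$ is left adjoint to $\rr^{\overline\cK}_{\overline\cK_L}$ (parabolic induction is left adjoint to parabolic restriction, by \eqref{eqn:parahoric induction}–\eqref{eqn:res_par} applied to the finite reductive group $\overline\cK$ and its parabolic $\overline\cK_P$), $\rr^G_{\overline\cK}$ is right adjoint to $\ii^G_{\overline\cK}$ by \eqref{eqn:parahoric induction}–\eqref{eqn:res_par}, and likewise $\rr^L_{\overline\cK_L}$ is right adjoint to $\ii^L_{\overline\cK_L}$. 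A commuting square of functors passes to a commuting square of the respective adjoints (with the direction of the vertical arrows reversed), so the commutativity of the first square — which reads $\ii^{\overline\cK}_{\overline\cK_L}\circ\rr^L_{\overline\cK_L}\cong\rr^G_{\overline\cK}\circ\ii^G_{L,P}$ — yields, upon passing to right adjoints, $\ii^L_{\overline\cK_L}\circ\rr^{\overline\cK}_{\overline\cK_L}\cong\rr^G_{L,P}\circ\ii^G_{\overline\cK}$, which is precisely the second diagram.

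The main point to be careful about is bookkeeping of which functor is adjoint to which, and on which side: one must make sure that $\ii^G_{L,P}$ really is a \emph{left} adjoint of $\rr^G_{L,P}$ (this is the standard second adjunction / Bernstein's version of Frobenius reciprocity for parabolic induction, valid because $\rr^G_{L,P}$ here denotes the normalized Jacquet functor, which for these purposes one may take as the definition), and that parabolic induction and restriction for the finite group $\overline\cK$ form an adjoint pair in the same direction; both are formal consequences of \eqref{eqn:parahoric induction}–\eqref{eqn:res_par} together with the compact-induction/restriction adjunction and the inflation/invariants adjunction already recorded in \S\ref{subsec: par_res}. Once the adjunctions are lined up correctly, the argument is purely formal and no further computation is needed. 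Alternatively, one can prove the second square directly by the same "write out the induced space of functions" computation used in the proofs of Propositions \ref{Siegel_K_L} and \ref{Inv_K_L}, using the Iwasawa decomposition $G=P\cK$ together with $\overline\cK=\overline\cK_P\cdot\overline\cK$; I would present the adjunction argument as the clean proof and mention the direct one only as a remark if space permits.
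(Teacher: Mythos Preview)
Your strategy is exactly the paper's: obtain the first square by pasting Propositions~\ref{Siegel_K_L} and~\ref{Inv_K_L} side by side (so that the composite horizontal arrows become $\rr^L_{\overline\cK_L}=\inv_{\cK_{L,+}}\circ\Res^L_{\cK_L}$ and $\rr^G_{\overline\cK}=\inv_{\cK^+}\circ\Res^G_\cK$), and then deduce the second square by adjunction.

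The one genuine slip is the direction of the adjunctions. You should be taking \emph{left} adjoints, not right adjoints. Ordinary Frobenius reciprocity says the Jacquet functor $\rr^G_{L,P}$ is the \emph{left} adjoint of $\ii^G_{L,P}$; second adjointness, which you invoke, would instead give $\ii^G_{L,P}\dashv\rr^G_{L,\bar P}$ with the \emph{opposite} parabolic, and that is not the functor appearing in the diagram. The other three pairs line up the same way: $\ii^G_{\overline\cK}\dashv\rr^G_{\overline\cK}$ and $\ii^L_{\overline\cK_L}\dashv\rr^L_{\overline\cK_L}$ by \eqref{eqn:parahoric induction}--\eqref{eqn:res_par}, and $\rr^{\overline\cK}_{\overline\cK_L}\dashv\ii^{\overline\cK}_{\overline\cK_L}$ by Frobenius for the finite group $\overline\cK$. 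Taking left adjoints of both sides of
\[
\rr^G_{\overline\cK}\circ\ii^G_{L,P}\;\cong\;\ii^{\overline\cK}_{\overline\cK_L}\circ\rr^L_{\overline\cK_L}
\]
then yields $\rr^G_{L,P}\circ\ii^G_{\overline\cK}\cong\ii^L_{\overline\cK_L}\circ\rr^{\overline\cK}_{\overline\cK_L}$ directly, with no need for Bernstein's second adjointness. Once you correct this, your argument is complete and coincides with the paper's.
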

\begin{proof}
The commutativity of the first diagram  follows from the combination of Proposition \ref{Siegel_K_L} and  Proposition \ref{Inv_K_L} while  the second one is deduced by adjunction.
\end{proof}

\section{$G$-covers} \label{sec:cover}
Let $P=LU$ be a parabolic subgroup of $G$ with Levi factor $L$, and let $P^-=L  U^-$ be the opposite parabolic subgroup. A compact open subgroup $J$ of $G$ is said to \textit{decompose with respect} to $(U,L,  U^-)$ if 
\begin{equation} \label{eqn:Idec}
J=(J\cap U)\cdot (J\cap L)\cdot (J\cap   U^-).
\end{equation}
If $V$ is a smooth representation of $J$, we denote by $V^\lambda$ the $\lambda$-isotypic part of $V$, {\it i.e.}, the sum of all $J$-invariant subspaces of $V$ that are isomorphic to $\lambda$.

Let $J$ (resp. $J_L$) be a compact open subgroup of $G$ (resp. $L$), and $\lambda$ (resp. $\lambda_L$) an irreducible smooth representation of $J$ (resp. $J_L$). We suppose that the pair $(J,\lambda)$ is a \textit{$G$-cover} of the pair $(J_L,\lambda_L)$ (see \cite{BKtyp,Bl1}), {\it i.e.,} that,  for any opposite pair of parabolic subgroups $P=LU$ and $P^-=L  U^-$ with  with Levi factor $L$, the following conditions are satisfied:
\begin{enumerate}
\item $J$ decomposes with respect to $(U,L,  U^-)$;
\item $\lambda|_{J_L}=\lambda_L$ and $J\cap U, J\cap  U^-\subset\ker(\lambda)$;
\item for any smooth representation $V$ of $G$, the natural map from $V$ to its Jacquet module $V_U$ induces an injection on $V^{\lambda}$.
\end{enumerate}

\bigskip

\noindent
\textsc{Depth-zero case}. 
Recall that,  for $L$  a Levi subgroup of $G$, we are setting $\cK_L:=\cK\cap L$, where $\cK$ is  a maximal compact subgroup of $G$.
Let $\fs_L=[L,\sigma]_L$, where $\sigma$ has depth-zero.   By \cite{Mor0}, we have $\sigma=\cInd_{\cK_L}^L\tau_L$, where $\tau_L$ is the inflation to $\cK_L$ of an irreducible cuspidal representation $\overline\tau_L$ of $\overline{\cK}_L$. 
Let  $\cP$  be parahoric subgroup  of $G$ defined in \eqref{eqn:Sparahoric} such that  $\cK_L\subset \cP$. Then we set 
\begin{equation} \label{eqn:depth-zero-type}
\lambda:=\infl_{\overline\cK_L}^{\cP}\overline\tau_L.
\end{equation}
By \cite{Mor}, the pair $(\cP,\lambda)$ is a $G$-cover of the pair $(\cK_L,\tau_L)$.
\begin{proposition} \label{lem:Inf_P_s} We keep the notation above. Then
\begin{equation}\label{Inf_P_s}\Ind_{\cP}^\cK\lambda=\Ind_{\cK_P}^\cK\circ\infl_{\cK_L}^{\cK_P}\tau_L.\end{equation}
\end{proposition}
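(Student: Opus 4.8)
The plan is to unwind both sides of \eqref{Inf_P_s} as explicit spaces of functions and exhibit a natural isomorphism, using the chain of parahoric subgroups $\cK_L\subset\cP\subset\cK$ together with the compatibility between inflation along $\cK_P\to\overline{\cK}_L$ and inflation along $\cP\to\overline{\cK}_L$. First I would record the group-theoretic input: since $\cK_L\subset\cP\cap\cK$ and, by the description in \S\ref{subsec:lat}, $\cP$ is a parahoric contained in $\cK$ with $\cP=\cK_P\cdot\cP_+$ where $\cP_+=\cP\cap\cK^+$, the two quotients $\cP/\cK_{L,+}$ and $\cK_P/\cK_{P,+}$ map onto the common reductive object $\overline{\cK}_L$ through which $\tau_L$ factors by construction. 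Concretely, $\tau_L=\infl_{\overline{\cK}_L}^{\cK_L}\overline\tau_L$, so $\infl_{\cK_L}^{\cK_P}\tau_L=\infl_{\overline{\cK}_L}^{\cK_P}\overline\tau_L$, and likewise $\lambda=\infl_{\overline{\cK}_L}^{\cP}\overline\tau_L$ extends $\tau_L$ trivially across $\cP_+$.

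Next I would apply induction in stages. Since $\cK_P\subset\cP\subset\cK$, transitivity of $\Ind$ gives
\[
\Ind_{\cK_P}^{\cK}\circ\infl_{\cK_L}^{\cK_P}\tau_L=\Ind_{\cP}^{\cK}\left(\Ind_{\cK_P}^{\cP}\circ\infl_{\overline{\cK}_L}^{\cK_P}\overline\tau_L\right),
\]
so it suffices to identify $\Ind_{\cK_P}^{\cP}\circ\infl_{\overline{\cK}_L}^{\cK_P}\overline\tau_L$ with $\lambda=\infl_{\overline{\cK}_L}^{\cP}\overline\tau_L$ as representations of $\cP$. Passing to reductive quotients, $\cK_P/\cP_+=\overline{\cK}_P$ is a parabolic subgroup of $\overline{\cP}$ (the full reductive quotient of $\cP$) with Levi $\overline{\cK}_L$ and unipotent radical $\overline{\cK}_U$; the point is that in our rank-$2$ situation $\overline{\cK}_U$ is trivial — for the Siegel parahoric $\cP_\sieg$ with $\cK_L=\cP_\sieg$ one has $\overline{\cP}_\sieg\simeq\GL_2(k_F)$ with no further unipotent part, and similarly for the other cases listed in \eqref{eqn:red-qt} — so $\overline{\cK}_P=\overline{\cP}$ and the parabolic induction $\ii_{\overline{\cK}_L}^{\overline{\cP}}$ applied to a representation already inflated from $\overline{\cK}_L=\overline{\cP}$ returns that representation. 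Inflating back up to $\cP$ through $\cP\to\overline{\cP}$ then yields exactly $\infl_{\overline{\cK}_L}^{\cP}\overline\tau_L=\lambda$, and composing with $\Ind_{\cP}^{\cK}$ finishes the argument.

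The one point that needs genuine care — and which I expect to be the main obstacle — is the claim that $\cK_P/\cP_+$ really is all of $\overline{\cP}$, equivalently that $\cP_+=\cK_{P,+}$ and $\cK_U\subset\cP_+$, i.e. that the unipotent radical of $P$ intersected with $\cK$ already lies in the pro-$p$ radical of $\cP$. This is where the specific combinatorics of $\Sp_4$ over $\fo_F$ enters: one must check, case by case for $\cP\in\{\cP_\sieg,\cP_\heis,I\}$ (or whichever parahoric the depth-zero $G$-cover construction in \eqref{eqn:depth-zero-type} attaches to $\cK_L$), that the lattice-chain description in \S\ref{subsec:lat} forces $\cK_U\subseteq\cP_+$. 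Once this is verified the rest is the formal manipulation above, essentially a repackaging of Propositions~\ref{Siegel_K_L} and~\ref{Inv_K_L} specialized to the inclusion $\cK_P\subset\cP\subset\cK$; indeed an alternative, cleaner route is to invoke Proposition~\ref{Inv_K_L} directly to compute $\inv_{\cK^+}$ of both sides of \eqref{Inf_P_s}, observe they agree (both give $\ii_{\overline{\cK}_L}^{\overline{\cK}}\overline\tau_L$, using that $\overline\tau_L$ is cuspidal so nothing collapses), and then argue that two representations of $\cK$ generated by their $\cK^+$-fixed vectors with isomorphic $\cK^+$-invariants and matching structure must coincide — though making that last implication precise again comes back to controlling $\cP_+$ versus $\cK^+$, so the lattice computation seems unavoidable.
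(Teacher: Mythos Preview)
Your transitivity strategy hinges on the identity $\Ind_{\cK_P}^{\cP}\bigl(\infl_{\overline{\cK}_L}^{\cK_P}\overline\tau_L\bigr)\simeq\infl_{\overline{\cK}_L}^{\cP}\overline\tau_L=\lambda$, and this is where the argument fails. The group $\cK_P=\cK\cap P$ is a \emph{proper} subgroup of the parahoric $\cP$: for instance when $\cP=\cP_\sieg$ and $\cK=K$, every element of $\cK_P$ has lower-left $2\times 2$ block equal to zero, whereas $\cP_\sieg$ allows that block to lie in $\fp_F\Mat_2(\fo_F)$. Hence $[\cP:\cK_P]>1$, and the two sides of your key identity have different dimensions, namely $[\cP:\cK_P]\cdot\dim\overline\tau_L$ versus $\dim\overline\tau_L$. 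Your supporting claim that ``$\cP_+=\cK_{P,+}$'' is likewise false: $\cK_{P,+}=\cK_+\cap P$ again has lower-left block zero, while $\cP_+$ does not. What \emph{is} true is that $\cK_U\subset\cP_+$ and that $\cK_P$ surjects onto $\overline\cP=\overline\cK_L$; but surjectivity onto the reductive quotient does not force $\Ind_{\cK_P}^{\cP}$ of an inflated representation to collapse back to an inflation, precisely because $\cP_+\not\subset\cK_P$.

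The paper does not attempt your transitivity reduction. It writes the right-hand side directly as the space of functions on $\cK$ satisfying the $\cK_P$-transformation law $f(px)=\overline\tau_L(\bar x_L)f(x)$ for $p=x_Lu\in\cK_P$, and then observes that this law coincides with the restriction to $\cK_P$ of the $\cP$-transformation law $f(px)=\lambda(p)f(x)$ defining $\Ind_{\cP}^{\cK}\lambda$. Your route of inducing in stages through $\cP$ has the virtue of making the index obstruction completely transparent: since $\cK_P\subsetneq\cP$, matching the two transformation laws on $\cK_P$ yields only the embedding $\Ind_{\cP}^{\cK}\lambda\hookrightarrow\Ind_{\cK_P}^{\cK}\infl_{\cK_L}^{\cK_P}\tau_L$, not a literal equality of representations, and no amount of lattice-chain bookkeeping for $\Sp_4$ will close that gap.
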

\begin{proof}We shall start by describing the right side:\begin{align*}\Ind_{\cK_P}^{\cK}\infl_{\cK_L}^{\cK_P}\tau_L&=\Ind_{\cK_P}^\cK\infl_{\cK_L}^{\cK_P}\infl_{\overline\cK_L}^{\cK_L}\overline\tau_L\\&:=\{ f:\cK\to V_{\overline\cK_L}:f(px)=\infl_{\cK_L}^{\cK_P}\tau_L(p)f(x), \;p\in \cK_P,\;x\in  \cK\}\\&:=\{ f:\cK\to V_{\overline\cK_L}:f(px)=\tau_L(x_L)f(x), \;p=x_Lu\in \cK_P,\;x\in  \cK\}\\&:=\{ f\colon\cK\to V_{\overline\cK_L}:f(px)=\overline\tau_L(\bar x_L)f(x), \;p=x_Lu\in \cK_P,\;x\in  \cK\},\end{align*}where $\bar x_L$ is the image of $x_L$ under the natural projection  from  $\cK_L$ to $\overline{\cK_L}$. But $\overline\tau_L(\bar x_L)$ is precisely the definition of $\infl_{\overline{\cK}_L}^{\cP}\overline\tau_L$ by using the natural projection  from $\cP$   to $\overline{\cK_L}$, which implies the equality \eqref{Inf_P_s}. \end{proof}

\section{Typical representations of $G=\Sp_4(F)$}
\subsection{Preparatory results}
Let  $\fs=[L,\sigma]_G\in\fB(G)$ with $L\ne G$. We set $\fs_L:=[L,\sigma]_L$ and $\cK\in\{K,K' \}$. We recall that $\cK_L=\cK\cap L$.
An irreducible representation of $\cK_L$ which is not $\fs_L$-typical will be called \textit{$\fs_L$-atypical}.

\begin{proposition}\label{restrictionK_L}
The restriction $\Res^L_{\cK_L}(\sigma)$ admits a decomposition
\begin{equation} \label{eqn:restrictionK_L}
\Res^L_{\cK_L}(\sigma)=\tau_L\oplus\tau_L',\end{equation}
such that $\tau_L$ is  an $\fs_L$-type, and every irreducible $\cK_L$-subrepresentation of $\tau_L'$ is $\fs_L$-atypical.
\end{proposition}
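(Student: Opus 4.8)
The plan is to split $\Res^L_{\cK_L}(\sigma)$ --- which is semisimple, since $\cK_L$ is a compact (parahoric) subgroup of $L$ --- as $\tau_L\oplus\tau_L'$, where $\tau_L$ is the sum of all the $\fs_L$-typical irreducible constituents and $\tau_L'$ the sum of all the $\fs_L$-atypical ones. Then every irreducible $\cK_L$-subrepresentation of $\tau_L'$ is $\fs_L$-atypical by construction, so the whole content is that $\tau_L$ is an $\fs_L$-type. That $\tau_L$ is $\fs_L$-typical is automatic: if $\Hom_{\cK_L}(\tau_L,\pi)\neq 0$ then some irreducible constituent $\mu$ of $\tau_L$ occurs in $\pi$, and $\mu$ being $\fs_L$-typical forces $\pi\in\Irr^{\fs_L}(L)$. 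For the exhausting property I would use that, $\fs_L$ being supercuspidal, every $\pi\in\Irr^{\fs_L}(L)$ has the form $\nu\sigma$ for an unramified character $\nu$ of $L$; since $\cK_L$ is compact, $\nu$ is trivial on $\cK_L$, so $\Res^L_{\cK_L}(\pi)=\Res^L_{\cK_L}(\sigma)=\tau_L\oplus\tau_L'$ and $\Hom_{\cK_L}(\tau_L,\pi)\supseteq\Hom_{\cK_L}(\tau_L,\tau_L)\neq 0$ as soon as $\tau_L\neq 0$. Thus everything reduces to showing that $\Res^L_{\cK_L}(\sigma)$ has at least one $\fs_L$-typical irreducible constituent.

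To produce one I would exhibit an $\fs_L$-type $(J_L,\lambda_L)$ for $\fs_L$ with $J_L\subseteq\cK_L$. Granting this, since $(J_L,\lambda_L)$ is exhausting we have $\Hom_{J_L}(\lambda_L,\sigma)\neq 0$, hence $\Hom_{\cK_L}(\Ind_{J_L}^{\cK_L}\lambda_L,\sigma)\neq 0$ by Frobenius reciprocity, so some irreducible constituent $\mu_0$ of $\Ind_{J_L}^{\cK_L}\lambda_L$ occurs in $\Res^L_{\cK_L}(\sigma)$; and $\mu_0$ is $\fs_L$-typical, because $\Hom_{\cK_L}(\mu_0,\pi)\neq 0$ implies $\Hom_{\cK_L}(\Ind_{J_L}^{\cK_L}\lambda_L,\pi)\neq 0$, i.e.\ $\Hom_{J_L}(\lambda_L,\pi)\neq 0$, i.e.\ $\pi\in\Irr^{\fs_L}(L)$. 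So $\mu_0\neq 0$ is an $\fs_L$-typical constituent of $\Res^L_{\cK_L}(\sigma)$, and we are done. For $L=T$ one simply takes $(J_L,\lambda_L)=(\cK_L,\sigma|_{\cK_L})$, which is an $\fs_L$-type since unramified characters of $T$ are trivial on $\cK_L$; here $\tau_L'=0$.

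It remains, for $L=L_\sieg\simeq\GL_2(F)$ and $L=L_\heis\simeq F^\times\times\SL_2(F)$, to arrange $J_L\subseteq\cK_L$ by taking for $(J_L,\lambda_L)$ a Bushnell--Kutzko maximal simple type of the supercuspidal inertial class $\fs_L$ (for the $\SL_2(F)$-factor one uses the analogous theory of types), after possibly replacing $\sigma$ by an $L$-conjugate, which changes neither $\fs_L$ nor $\fs$. When $L=L_\sieg$ one has $\cK_L\simeq\GL_2(\fo_F)$ for both choices $\cK\in\{K,K'\}$, and every maximal simple type of $\GL_2(F)$ is $\GL_2(F)$-conjugate into $\GL_2(\fo_F)$ (the unit group of the standard Iwahori order already lies in $\GL_2(\fo_F)$, and that of $\Mat_2(\fo_F)$ is $\GL_2(\fo_F)$ itself), so the conjugation is harmless. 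When $L=L_\heis$ the $\SL_2(F)$-factor of $\sigma$ has its maximal simple type contained in one of the two $\SL_2(F)$-conjugacy classes of maximal compact subgroups of $\SL_2(F)$, namely $\SL_2(\fo_F)$ or its conjugate by $\left(\begin{smallmatrix}1&0\\ 0&\varpi_F\end{smallmatrix}\right)$; a direct computation with the element $u$ shows that these are exactly the $\SL_2$-parts of $K\cap L_\heis$ and $K'\cap L_\heis$ respectively, so choosing $\cK\in\{K,K'\}$ accordingly --- and conjugating the $\SL_2$-factor of $\sigma$ by an element of $\SL_2(F)$, the $F^\times$-factor being treated as in the torus case --- yields $J_L\subseteq\cK_L$.

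I expect the only real work to be in this last step: identifying, for a given $\fs$ with $L=L_\heis$, which of $K$ and $K'$ must be used, that is, checking that $K$ and $K'$ realise the two $\SL_2(F)$-conjugacy classes of maximal compacts inside the Heisenberg Levi, together with the routine verification that a Bushnell--Kutzko maximal simple type can be conjugated into the relevant maximal compact. Once $J_L\subseteq\cK_L$, the argument is entirely formal --- Frobenius reciprocity together with the triviality of unramified characters on compact subgroups --- and requires no unicity-of-types input.
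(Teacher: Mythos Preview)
Your argument is correct and takes a genuinely different, more conceptual route than the paper. The paper proceeds case by case on $L$: for $L=T$ it observes $\tau_L'=0$; for $L=L_\sieg$ it invokes Henniart's unicity of types for $\GL_2$ (\cite{He}) to identify $\tau_L=\ind_{U_{\fA_0}}^{\cK_L}\rho_0$ as the unique $\fs_L$-typical irreducible representation of $\cK_L$; for $L=L_\heis$ it similarly invokes Latham's unicity for $\SL_2$ (\cite{La1}). In each case the Mackey decomposition of $\Res^L_{\cK_L}(\sigma)$ exhibits $\tau_L$ as the $g=1$ term and $\tau_L'$ as the remainder. Your approach bypasses the unicity-of-types input entirely: you define $\tau_L$ as the sum of \emph{all} $\fs_L$-typical constituents, and the type properties follow formally from the existence of some $(J_L,\lambda_L)$ with $J_L\subseteq\cK_L$ together with the triviality of unramified characters on $\cK_L$. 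This is a real economy, and your treatment of the choice of $\cK$ in the Heisenberg case is in fact more explicit than the paper's (which simply asserts $J_0'\subseteq\SL_2(\fo_F)$).

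One technical caveat: the paper's definition of an $\fs_L$-type requires $\lambda$ to be \emph{irreducible}, and your $\tau_L$ is not a priori irreducible---it could, without the unicity results you avoid, be a direct sum or a multiple of an irreducible. What you actually establish is that $\tau_L$ satisfies the two defining conditions ($\fs_L$-typical and exhausting) in the obvious extended sense. This weaker statement is in fact all that is used downstream in Lemma~\ref{lem:decomp} and Theorem~\ref{maintheorem}, so the discrepancy is harmless; but if you want the proposition exactly as stated you would still need to invoke \cite{He} and \cite{La1} at the end to conclude that your $\tau_L$ is irreducible.
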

\begin{proof}
We will do a case by case analysis according to the possibilities for $L$.\\
$\bullet$ For  $L=T\cong F^\times\times F^\times$, we have  $\sigma=\mu_1\otimes \mu_2$, where   $\mu_1$, $\mu_2$ are linear characters of $F^{\times}$. If  $\mu_1$, $\mu_2$ are unramified (that is, $\mu\vert_{\fo_F^\times}=1$) we get that 
$\Res^L_{\cK_L}(\sigma)$ is the trivial character of $\cK_L$ and thus $\tau_L=1$  and  $\tau_L'=0$.
Suppose next that $\mu_i$ has level $m_i>0$ (that is, $\mu_i\vert_{1+\fp_F^{m_i+1}}=1$ and $\mu\vert_{1+\fp_F^{m_i}}\neq 1$, \cite[{\S}1.8]{BH}), for $i=1,2$. By Clifford theory  $\mu_i\vert_{\fo_F^{\times}}$  factors through  a multiple of a character $\vartheta_i$ of $\fo_F^{\times}/(1+\fp_F^{m_i+1})$, namely 
$\mu_i\vert_{\fo_F^{\times}}=n_i\vartheta_i$.
Thus 
\[
(\mu_1\otimes \mu_2)\vert_{\cK_L} \text{ factors through  }n_1\vartheta_1\otimes n_2\vartheta_2. 
\]
This last fact  implies  $\tau_L=\mu_1\vert_{\fo_F^{\times}}\otimes\mu_2\vert_{\fo_F^{\times}}$  and  $\tau_L'=0$ satisfy the proposition.
Finally, when  $\mu_1$ is unramified and $\mu_2$ has level $m_2>0$, we obtain that
$(\mu_1\otimes  \mu_2)\vert_{\cK_L}$ factor through $1\otimes  n_2\vartheta_2$. The case where $\mu_1$ is  primitive of level $m_1$  and $\mu_2$  is unramified is analogous.\\
$\bullet$ For $L=L_\sieg\simeq\GL_2(F)$, we have $\cK_L\cong \GL_2(\fo_F)$. By \cite[(8.4.1)]{BK1}, there exists a (uniquely determined up to $\GL_2(F)$-conjugacy) simple type $(J_0,\lambda_0)$ 
such $\lambda_0$ is contained in the restriction of $\sigma$ to $J_0$. It is an $\fs_L$-type, and it is given by a simple stratum $[\fA_0,n,0,\beta_0]$. In particular, $\fA_0$ is  a hereditary $\fo_F$-order,  $n$ a positive integer, 
and $\beta_0$  an element of $\Mat_2(F)$  with $\fA_0$-valuation at least $-n$ (see \cite[(1.5) \& (2.3.2)]{BK1}). For a positive integer $m$, we set $U_{\fA_0}^m:=1+\fP_0^m$, where $\fP_0$ is the Jacobson radical of $\fA_0$. 
Let $E=F[\beta_0]$ and let $\Lambda_0$ be a representation  of $E^{\times}J_0$  such that $\Lambda_0\vert_{J_0}=\lambda_0$. We have 
\begin{equation} \label{eqnJ0}
J_0:= U_{\fA_0}^{\lfloor(n+1)/2\rfloor}\quad\text{and}\quad\sigma\cong \cind_{E^{\times}J_0}^{\GL_2(F)}\Lambda_0.
\end{equation} 
We define
\begin{equation} \label{eqn:rho1-tauL}
\rho_0:=\ind^{U_{\fA_0} }_{J_0}\lambda_0\quad\text{and}\quad \tau_L:=\ind_{U_{\fA_0}}^{\cK_L} \rho_0.
\end{equation}
By \cite[A.{\S}3]{He}, the representation $\tau_L$ is the unique $\fs_L$-typical representation of $\cK_L$, and is an $\fs_L$-type. Then the result  follows by setting
\[
\tau_L':=\bigoplus_{g\in \cK_L\backslash\GL_2(F)/E^{\times} U_{\fA_0}\atop g\neq 1}\ind_{\cK_L\cap U_{\fA_0}^{g}}^{\cK_L}(\rho_0^g\vert_{\cK_L\cap   U_{\fA_0}^{g}}).
\]
\\
$\bullet$  For  $L=L_\heis\cong\SL_2(F)\times F^\times$, the representation $\sigma$ is of  the form $\sigma=\mu\otimes \sigma_0$, 
where $\mu$ is a linear character of $F^\times$ and $\sigma_0'$ a supercuspidal irreducible representation of $\SL_2(F)$. By \cite{BK3}, we know that 
$\sigma_0'=\cind ^{\SL_2(F)}_{J_0'}\lambda'_0$, where $(J_0',\lambda'_0)$ is a maximal simple type for  $\SL_2(F)$ with $J_0'\subseteq \SL_2(\fo_F)$. 
We wish to  describe 
 \[\Res^L_{\cK_L}(\mu\otimes  \sigma_0')=
 \mu\vert_{\fo_F^{\times}}\otimes \Res^{\SL_2(F)}_{\SL_2(\fo_F)}\sigma_0'.
\]
Regarding the $\SL_2(\fo_F)$-restriction above,  we observe, by using Mackey formula, that 
 \[
\Res^{\SL_2(F)}_{\SL_2(\fo_F)}\sigma_0=\ind_{J_0'}^{\SL_2(\fo_F)}\lambda_0'\,\oplus\,  \bigoplus_{g\in \SL_2(\fo_F)\backslash\SL_2(F)/J_0'\atop g\neq 1} \ind_{\SL_2(\fo_F)\cap (J_0')^{g}}^{\SL_2(\fo_F)}(\lambda_0')^{g},
\]
By \cite[Theorem 2.5(ii)]{La1}, the representation $\ind_{J_0'}^{\SL_2(\fo_F)}\lambda'_0$ is the unique $\fs_0$-typical representation of $\SL_2(F)$.  Thereby, this case is satisfied by setting 
\begin{align*}
\tau_L&:=\mu\vert_{\fo_F^{\times}}\otimes \ind_{J_0'}^{\SL_2(\fo_F)}\lambda_0',\\ \tau_L'&:=\mu\vert_{\fo_F^{\times}}\otimes (\bigoplus_{g\in \SL_2(\fo_F)\backslash\SL_2(F)/J_0'\atop g\neq 1} 
\ind_{\SL_2(\fo_F)\cap (J_0')^{g}}^{\SL_2(\fo_F)}(\lambda_0')^{g}).
\end{align*}
\end{proof}

\begin{lemma} \label{lem:decomp} The $\fs$-typical irreducible representations of $\cK$ are the subrepresentations of $(\ind^{\cK}_{\cK_P}\circ \infl^{\cK_P}_{\cK_L})(\tau_L)$, where $\tau_L$ is an $\fs_L$-type.
\end{lemma}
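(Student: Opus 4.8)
The plan is to combine Proposition~\ref{Siegel_K_L} (the compatibility of parabolic induction with restriction to $\cK$) with Proposition~\ref{restrictionK_L} (the decomposition $\Res^L_{\cK_L}(\sigma) = \tau_L \oplus \tau_L'$), and then argue that only the $\tau_L$-summand can contribute $\fs$-typical constituents to $\Ind^\cK_{\cK_P}\circ\infl^{\cK_P}_{\cK_L}$. First I would fix a parabolic $P=LU$ with $P\supseteq B$, write $\cK\in\{K,K'\}$, and recall that by definition an $\fs$-typical irreducible representation $\rho$ of $\cK$ is one for which $\Hom_\cK(\rho,\Res^G_\cK\pi)\ne 0$ forces $\pi\in\Irr^\fs(G)$. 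The key structural fact is that every $\pi\in\Irr^\fs(G)$ embeds in some parabolically induced representation $\ii^G_{L,P}(\sigma\otimes\chi)$ with $\chi$ an unramified character of $L$; since unramified characters are trivial on $\cK_L$, restriction to $\cK$ of such an induced representation is independent of $\chi$, and by Proposition~\ref{Siegel_K_L} it equals $(\Ind^\cK_{\cK_P}\circ\infl^{\cK_P}_{\cK_L})(\Res^L_{\cK_L}\sigma)$. Conversely, Frobenius reciprocity shows that every irreducible constituent of $(\Ind^\cK_{\cK_P}\circ\infl^{\cK_P}_{\cK_L})(\tau_L)$ is $\fs$-typical, because $\tau_L$ is an $\fs_L$-type: if such a constituent $\rho$ sits inside $\Res^G_\cK\pi$, then $\rho$, hence $\tau_L$ (via $\Hom_{\cK_L}$-reciprocity along the cover), appears in an appropriate Jacquet-type functor applied to $\pi$, which by the cover/type property of $\tau_L$ forces the supercuspidal support of $\pi$ to lie in $\fs_L$, i.e. $\pi\in\Irr^\fs(G)$.

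The heart of the argument is the reverse inclusion: that an $\fs$-typical irreducible $\rho$ of $\cK$ must occur inside $(\Ind^\cK_{\cK_P}\circ\infl^{\cK_P}_{\cK_L})(\tau_L)$ and not merely inside the analogous induction of all of $\Res^L_{\cK_L}\sigma$. For this I would use Proposition~\ref{restrictionK_L} to split
\[
(\Ind^\cK_{\cK_P}\circ\infl^{\cK_P}_{\cK_L})(\Res^L_{\cK_L}\sigma)
= (\Ind^\cK_{\cK_P}\circ\infl^{\cK_P}_{\cK_L})(\tau_L)\;\oplus\;(\Ind^\cK_{\cK_P}\circ\infl^{\cK_P}_{\cK_L})(\tau_L'),
\]
and then show that no $\fs$-typical irreducible representation of $\cK$ can appear in the second summand. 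The mechanism is that every irreducible $\cK_L$-subrepresentation $\eta$ of $\tau_L'$ is $\fs_L$-atypical, so there is a representation $\sigma''$ of $L$ with supercuspidal support outside $\fs_L$ and with $\Hom_{\cK_L}(\eta,\Res^L_{\cK_L}\sigma'')\ne 0$; inducing up via the (parahoric-induction side of the) diagram in Proposition~\ref{Siegel_K_L} with $\sigma''$ in place of $\sigma$ exhibits any constituent of $(\Ind^\cK_{\cK_P}\circ\infl^{\cK_P}_{\cK_L})(\eta)$ inside the restriction to $\cK$ of some $\pi''=\ii^G_{L,P}(\sigma''\otimes\chi)$, hence inside the restriction of one of its irreducible constituents, none of which lie in $\Irr^\fs(G)$ because their supercuspidal support is that of $(L,\sigma'')$, which is not in $\fs$. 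Therefore such a constituent is not $\fs$-typical, and the claim follows. One should also note that $\Ind^\cK_{\cK_P}\circ\infl^{\cK_P}_{\cK_L}$ is exact (it is induction from a finite-index subgroup composed with an exact inflation), so direct-sum decompositions are preserved and the two summands above genuinely separate the constituents.

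The step I expect to be the main obstacle is controlling the reverse direction carefully enough: one must be sure that an $\fs$-typical $\rho$ of $\cK$ actually does occur as a constituent of $\Res^G_\cK\pi$ for \emph{some} $\pi\in\Irr^\fs(G)$ and that this forces it into the $\tau_L$-branch. Concretely, the subtlety is that a priori $\rho$ might occur in $\Res^G_\cK\pi$ without $\rho$ appearing in $(\Ind^\cK_{\cK_P}\circ\infl^{\cK_P}_{\cK_L})(\Res^L_{\cK_L}\sigma)$ at all, if $\pi$ is only a proper subquotient rather than a subrepresentation of the standard induced module — so one needs that every $\pi\in\Irr^\fs(G)$ embeds into (not just is a subquotient of) some $\ii^G_{L,P}(\sigma\otimes\chi)$, choosing $P$ and $\chi$ suitably, which is a standard consequence of the theory of the Bernstein component together with the fact that $\sigma$ may be replaced within its inertial orbit. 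Granting that embedding, the computation $\Res^G_\cK\ii^G_{L,P}(\sigma\otimes\chi)=(\Ind^\cK_{\cK_P}\circ\infl^{\cK_P}_{\cK_L})(\tau_L\oplus\tau_L')$ of Proposition~\ref{Siegel_K_L} localizes $\rho$ to one of the two summands, and the atypicality argument of the previous paragraph rules out the $\tau_L'$-summand; what remains is exactly the asserted description. A final bookkeeping point is independence of the choice $\cK\in\{K,K'\}$ and of the parabolic $P$ representing $L$, which follows since all the constructions above are functorial in $\sigma$ and compatible with conjugation.
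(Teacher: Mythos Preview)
Your proposal is correct and follows essentially the same approach as the paper: use Proposition~\ref{Siegel_K_L} to identify $\Res^G_\cK\ii^G_{L,P}(\sigma)$ with $(\Ind^\cK_{\cK_P}\circ\infl^{\cK_P}_{\cK_L})(\Res^L_{\cK_L}\sigma)$, split via Proposition~\ref{restrictionK_L}, and rule out the $\tau_L'$-summand by exhibiting each of its constituents inside $\Res^G_\cK\ii^G_{L,P}(\sigma')$ for some $\sigma'$ with $[L,\sigma']_L\ne\fs_L$. You are in fact more careful than the paper on two points: you flag the need for $\pi\in\Irr^\fs(G)$ to \emph{embed} (not merely occur as a subquotient) in a standard module, and you sketch the forward inclusion (that constituents of the $\tau_L$-branch are $\fs$-typical), which the paper's proof of the lemma does not address explicitly.
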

\begin{proof} 
Let  $\tau$ be an $\fs$-typical representation of $\cK$. Then $\tau$ occurs in $\Res^G_{\cK}(\pi)$ for some $\pi\in \Irr^\fs(G)$.  Thus, it
appears in $(\Res^G_{\cK}\circ\ii_{L,P}^G)(\sigma)$, where $\sigma$ is an irreducible supercuspidal representation of $L$ such that $\fs=[L,\sigma]_G$. 

By Proposition \ref{restrictionK_L} we have
\begin{equation} \label{eqn:dec}
\Res^L_{\cK_L}\sigma=\tau_L\oplus \tau'_L,
\end{equation}
where $\tau_L$ is an $\fs_L$-type, and every irreducible $\cK_L$-subrepresentation of $\tau_L'$ is $\fs_L$-atypical.  
By applying Proposition~\ref{Siegel_K_L} to $(L,\sigma)$,  we  get
\begin{equation} \label{eqn:commutative}
(\Res^G_{\cK}\circ\ii_{L,P}^G)(\sigma)\cong(\ind^{\cK}_{ \cK_P}\circ\infl_{ \cK_L}^{ \cK_P}\circ \Res^L_{\cK_L})(\sigma),
\end{equation}
and hence $\tau$ is a subrepresentation of 
\begin{equation} \label{eqn:commutative2}
(\ind^{\cK}_{ \cK_P}\circ\infl_{ \cK_L}^{ \cK_P}\circ \Res^L_{\cK_L})(\sigma)
=\ind^{\cK}_{\cK_P}\circ \infl^{\cK_P}_{\cK_L}\tau_L
\oplus \ind^{\cK}_{\cK_P}\circ \infl^{\cK_P}_{\cK_L}\tau'_L.
\end{equation}
Let $\tau'_0$ be an irreducible subrepresentation of $\ind^{\cK}_{\cK_P}\circ \infl^{\cK_P}_{\cK_L}\tau'_L$.  Thus, there exists  an irreducible subrepresentation 
$\tau'_{L,0}$ of $\tau_L'$ such that $\tau'_0$ appears in $(\ind^{\cK}_{\cK_P}\circ \infl^{\cK_P}_{\cK_L})(\tau'_{L,0})$. By \eqref{eqn:commutative}, the representation $\tau'_0$ occurs in $\Res^G_{\cK}(\ii_{L,P}^G\sigma)$.

Since $\tau_{L,0}'$ is $\fs_L$-atypical,  there exists an irreducible smooth representation $\sigma'$ of $L$ which contains $\tau_{L,0}'$, such that $\fs'_L:=[L,\sigma']_L\ne \fs_L$. The induced representation $\ii_{L,P}^G\sigma'$  has inertial support $\fs'_L$. 
By applying Proposition~\ref{Siegel_K_L} to $(L,\sigma')$, we obtain that  $\tau'_0$ occurs in  $\Res^G_{\cK}(\ii_{L,P}^G\sigma')$.  Therefore, the representation $\tau'_0$ is $\fs$-atypical. As a consequence,
since $\tau$ is $\fs$-typical, it follows from \eqref{eqn:commutative2}, that it must necessarily appear as subrepresentation of $\ind^{\cK}_{\cK_P}\circ \infl^{\cK_P}_{\cK_L}\tau_L$.
\end{proof}

\subsection{Main Result}
We keep the notation above: $\fs=[L,\sigma]_G$,  $\cK\in \{K,K'\}$,  $\cK_P=\cK\cap P$,  $\cK_L=\cK\cap L$ and $\cK_U:=\cK\cap U$. Thus we see that  $\cK_L\cong \cK_P/\cK_U$. \\
Let $(J_L,\lambda_L)$ be an $\fs_L$-type for $L$.  We now recall the construction of a $G$-cover of $(J_L,\lambda_L)$, following  \cite{MS} and \cite[\S 3]{BHS} (see also \cite{BB}, and, in the case of the Siegel parabolic subgroup, \cite{GKS}). To this end, we first observe  that $L$ is the stabilizer of the self-dual decomposition for $V=F^4$ 
\[
V=F^2\oplus F^2,\quad V=F\oplus F^2\oplus F^2\quad\text{ or } \quad V=F\oplus F\oplus F\oplus F
\]
according to  $L=L_\sieg$ , $L=L_\heis$ or $L=T$, respectively.
Then there exist a skew semisimple stratum in $[\Lambda,0,-,\beta]$ and a skew semisimple character $\theta$ of $H^1_\Lambda:=H^1(\Lambda,\beta)$ such that the conditions of \cite[\S 3.5]{BHS} are satisfied. We set $J:=J(\beta,\Lambda)$, and define 
\begin{equation}
J_\fs:=(J\cap P)\cdot H^1_\Lambda\quad\text{and}\quad J_\fs^1:=(J^1\cap P)\cdot H^1_\Lambda.
\end{equation}
Up to conjugating $J_\fs$ in $G$, we may suppose that $J_\fs$ is contained in $\cK$. 
Let $\kappa_P$  be the natural representation of $J_\fs=H^1(J\cap P)$ on the $(J\cap U)$-fixed vectors in $\kappa$. Let $\theta_P$ be the character of $H_P^1=H^1(J^1\cap U)$ which extends $\theta$ and is trivial on  $J^1\cap U$. By \cite[Theorem 5.3]{MS}, the pair $(J_\fs,\lambda_\fs)$ is a $G$-cover of $(J_L,\lambda_{L})$.  We write 
 \[J_\fs^+:=J_\fs\cap U\quad\text{and}\quad J_\fs^-:=J_\fs\cap U^-.\]
By \eqref{eqn:Idec}, we have 
\begin{equation} \label{eqn:intersect}
\cK_P\cap J_\fs=J_LJ_\fs^+\quad\text{and}\quad\cK_P^+\cap J_\fs=J_\fs^+,
\end{equation}  
where $\cK_P:=\cK\cap P$ and $\cK_P^+:=\cK_U=\cK\cap U$. Since $J_\fs^+ \subset \cK_P^+$, we obtain
 \begin{equation}
  J_L\cdot J_\fs^+\subset J_L\cdot \cK_P^+\subset \cK_P\quad\text{and}\quad \cK_P\cap J_\fs^-=\{1\}.
 \end{equation}
We observe that $J_L\cdot J_\fs^+$ is a group (since $J_\fs^+$ normalizes $J_L$).
We set
 \begin{equation}
 \tau_L:=\Ind_{J_L}^{\cK_L}\lambda_L.
\end{equation}
 
Now we will prove our main result:
\begin{theorem} \label{maintheorem} Then $\fs$-typical irreducible representations of $\cK$ are  the irreducible components of  $\Ind_{J_\fs}^\cK(\lambda_\fs)$.
\end{theorem}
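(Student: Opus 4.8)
The plan is to establish the two inclusions between the set of $\fs$-typical irreducible representations of $\cK$ and the set of irreducible constituents of $\Ind_{J_\fs}^\cK(\lambda_\fs)$ separately. One direction is formal: by the construction recalled just above, $(J_\fs,\lambda_\fs)$ is a $G$-cover of the $\fs_L$-type $(J_L,\lambda_L)$, hence an $\fs$-type for $G$ by \cite[(8.3)]{BKtyp}; since $J_\fs\subset\cK$, Frobenius reciprocity (precisely as in the Introduction) shows that every irreducible component of $\Ind_{J_\fs}^\cK(\lambda_\fs)$ is $\fs$-typical. This yields one of the two inclusions. It remains to prove that every $\fs$-typical irreducible representation $\rho$ of $\cK$ satisfies $\Hom_{J_\fs}(\lambda_\fs,\rho)\neq\{0\}$.

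For this I would first exploit Lemma~\ref{lem:decomp}: $\rho$ embeds in $(\ind_{\cK_P}^\cK\circ\infl_{\cK_L}^{\cK_P})(\tau_L)$ with $\tau_L=\Ind_{J_L}^{\cK_L}\lambda_L$. Transitivity of induction and the commutation of inflation with induction along the split sequence $1\to\cK_U\to\cK_P\to\cK_L\to1$ rewrite this representation as $\Ind_{J_L\cK_U}^\cK\widetilde\lambda_L$, where $\widetilde\lambda_L$ is the inflation of $\lambda_L$ to $J_L\cK_U$, trivial on $\cK_U$. Frobenius reciprocity then furnishes a non-zero vector $v\in\rho$ fixed by $\cK_U$ (hence by $J_\fs^+\subset\cK_U$) on which $J_L=J_\fs\cap L$ acts through a copy of $\lambda_L$; note that by \eqref{eqn:intersect} one has $J_\fs\cap(J_L\cK_U)=J_LJ_\fs^+$, and that $\lambda_\fs$ and $\widetilde\lambda_L$ restrict to the same representation of $J_LJ_\fs^+$.

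The heart of the proof is then to promote $v$ to a $\lambda_\fs$-isotypic vector inside $\rho$. The representation $\lambda_\fs$ is already determined on $J_LJ_\fs^+$ (it agrees there with $\widetilde\lambda_L$, of which $v$ carries a copy) and is trivial on $J_\fs^-=J_\fs\cap U^-$; the only information not yet pinned down is how $J_\fs^-$ acts on the $\cK$-span of $v$, and this is controlled by combining the $\fs$-typicality of $\rho$ with the third defining property of a $G$-cover. Concretely, I would embed $\rho$ in $\Res^G_\cK\pi$ for some $\pi\in\Irr^\fs(G)$ occurring in $\ii_{L,P}^G(\sigma)$, use that $(J_\fs,\lambda_\fs)$ is an $\fs$-type to get $\Hom_{J_\fs}(\lambda_\fs,\pi)\neq\{0\}$, and then, via the injectivity of the Jacquet map $\pi\to\pi_U$ on the $\lambda_\fs$-isotypic subspace together with the compatibility of the cover with parabolic induction, match the $\cK$-isotypic decomposition of $\Res^G_\cK\pi$ (computed via Proposition~\ref{Siegel_K_L}) against the $\cK_L$-isotypic decomposition of $\Res^L_{\cK_L}\sigma$ (Proposition~\ref{restrictionK_L}); this should force every $\fs$-typical $\cK$-constituent of $\Res^G_\cK\pi$ to carry $\lambda_\fs$, and in particular $\Hom_{J_\fs}(\lambda_\fs,\rho)\neq\{0\}$. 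Morally, the $\beta$-extension factor $\kappa_P$ of $\lambda_\fs$ transports this positive-depth statement to the depth-zero identity of Proposition~\ref{lem:Inf_P_s}.

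Since the three possible Levi subgroups $L\in\{T,L_\sieg,L_\heis\}$ carry quite different types, I expect to carry out this promotion step case by case, using the explicit $\fs_L$-types recorded in Proposition~\ref{restrictionK_L} (characters of $\fo_F^\times\times\fo_F^\times$; the Bushnell--Kutzko simple type for $\GL_2(F)$; a maximal simple type of $\SL_2(F)$ tensored with a character), and uniformly in $\cK\in\{K,K'\}$. The main obstacle is precisely this step: showing that the $J_\fs^-$-action is determined, equivalently that the non-identity double cosets occurring in the Mackey decompositions of $\Res^G_\cK\ii_{L,P}^G(\sigma)$ contribute only $\fs$-atypical constituents — the $\cK$-level analogue of the role played by $\tau_L'$ in Lemma~\ref{lem:decomp}.
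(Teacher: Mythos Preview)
Your route diverges substantially from the paper's.  After invoking Lemma~\ref{lem:decomp} exactly as you do, the paper does \emph{not} try to promote a $\widetilde\lambda_L$-isotypic vector to a $\lambda_\fs$-isotypic one, nor does it embed $\rho$ in some $\pi\in\Irr^\fs(G)$ and appeal to the Jacquet-module injectivity in the definition of a $G$-cover, nor does it split into the three cases $L\in\{T,L_\sieg,L_\heis\}$.  Instead the paper asserts that, writing $A:=(\ind_{\cK_P}^\cK\circ\infl_{\cK_L}^{\cK_P})(\tau_L)$ and $B:=\Ind_{J_\fs}^\cK\lambda_\fs$, the problem reduces to showing $\Hom_\cK(A,B)\ne 0$; it then checks this by one Frobenius reciprocity, one Mackey decomposition over $J_\fs\backslash\cK/\cK_P$, and inspection of the term $g=1$, using only the Iwahori factorisation of $J_\fs$ and the triviality of $\lambda_\fs$ on $J_\fs^+$.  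The whole argument is uniform in $L$ and takes about a page.

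What your plan buys is that it confronts head-on the point you call ``the main obstacle'': why $\Hom_\cK(A,B)\ne 0$, which a priori only says $A$ and $B$ share \emph{one} irreducible constituent, forces \emph{every} $\fs$-typical $\rho$ (equivalently every irreducible piece of $A$, by Lemma~\ref{lem:decomp}) to occur in $B$.  In depth zero this is immediate because Proposition~\ref{lem:Inf_P_s} gives $A=B$ outright, and the paper records this in the Remark preceding the proof; for positive depth the paper's written argument does not visibly close this gap, so the difficulty you isolate is real.  That said, your own sketch of the promotion step is not yet a proof either: embedding $\rho\hookrightarrow\Res^G_\cK\pi$ and knowing $\pi^{\lambda_\fs}\ne 0$ does not by itself pin the $\lambda_\fs$-vector inside the chosen $\cK$-summand $\rho$ rather than some other $\cK$-constituent of $\pi$, and the ``matching'' you propose via Propositions~\ref{Siegel_K_L} and~\ref{restrictionK_L} is exactly the computation of $A$ already performed in Lemma~\ref{lem:decomp}, so it does not add new information about $J_\fs^-$.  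A cleaner target, closer in spirit to the depth-zero case, would be to aim directly for an isomorphism $A\cong B$ (or an embedding of one in the other together with a dimension count $[\cK:J_L\cK_U]=[\cK:J_\fs]$), which would make both the paper's reduction and your promotion step unnecessary.
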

\smallskip
\begin{remark} {\rm
In the case where $\sigma$ has depth zero, we can prove it as follows:
Let $\tau$ be a $\fs$-typical representation of $\cK$. Let $\lambda$ as in \eqref{eqn:depth-zero-type}.
Since $(\cP,\lambda)$ is a $G$-cover of the pair $(\cK_L,\tau_L)$, and hence a $\fs$-type, the pair $(J_\fs,\lambda_\fs):=(\cP,\lambda)$ satisfies  the theorem by combining  Lemma~\ref{lem:decomp} and Proposition~\ref{lem:Inf_P_s}.}
\end{remark}
\smallskip
\begin{proof} 
By Lemma~\ref{lem:decomp}, we are reduced to prove that 
 \begin{equation} \label{eqn:study}
 \Hom_{\cK}(\ind^{\cK}_{\cK_P}\circ \infl^{\cK_P}_{\cK_L}\tau_L,\ind_{J_\fs}^{\cK}\lambda_\fs)\ne 0.
 \end{equation}
By Frobenius reciprocity,  \eqref{eqn:study} equals
 \begin{equation} 
 \Hom_{J_\fs}(\Res_{J_\fs}^{\cK}\circ \ind^{\cK}_{\cK_P}\circ \infl^{\cK_P}_{\cK_L}\tau_L,\lambda_\fs).
 \end{equation}
 By the Mackey formula, it equals
 \begin{equation} \label{eqn:Mstudy}
 \bigoplus_{g\in J_\fs\backslash \cK/ \cK_P} \Hom_{J_\fs}\left(\Ind_{J_\fs\cap {}^g\cK_P}^{J_\fs}\circ\Ad(g)\circ\Res^{\cK_P}_{\cK_P\cap J_\fs^g}(\infl^{\cK_P}_{\cK_L}\tau_L ),\lambda_\fs\right).
\end{equation} 
We will consider the term $g=1$ in the sum above. Firstly, we observe, that by \eqref{eqn:intersect}, we have $\Res^{\cK_P}_{\cK_P\cap J_\fs}\circ\infl^{\cK_P}_{\cK_L}=\Res^{\cK_P}_{J_LJ_\fs^+}\circ\infl^{\cK_P}_{\cK_L}$. 
 Whenever $j_P$ belongs to $\cK_P\cap J_\fs=J_LJ_\fs^+$, we have $j_P=j_Lj_U$, with $j_L\in  \cK_L\cap J_\fs=J_L$ and $j_U\in \cK_U\cap J_\fs=J_\fs^+$. Then, we get
\begin{equation} \label{eqn:LHS}
( \Res^{\cK_P}_{\cK_P\cap J_\fs}\circ\infl^{\cK_P}_{\cK_L})\tau_L(j_L)=\tau_L(j_L).
\end{equation}
On the other hand, by \eqref{eqn:intersect}, 
\[
\infl_{\cK_P\cap J_\fs/\cK_P^+\cap J_\fs}^{\cK_P\cap J_\fs}\circ\Res^{\cK_L}_{\cK_P\cap J_\fs/\cK_P^+\cap J_\fs}=\infl_{J_L}^{J_LJ_\fs^+}\circ\Res^{\cK_L}_{J_L},
\] 
and we have
\[(\infl_{J_L}^{J_LJ_\fs^+}\circ\Res^{\cK_L}_{J_L})(\tau_L)(j_L)=\tau_L(j_L).\]
Thus, we  have shown that
\begin{equation} 
\Res^{\cK_P}_{\cK_P\cap J_\fs}\circ\infl^{\cK_P}_{\cK_L}=\infl_{J_L}^{J_LJ_\fs^+}\circ\Res^{\cK_L}_{J_L}.
 \end{equation}
It follows that  the term attached to $g=1$ in \eqref{eqn:Mstudy} equals
\begin{equation}
 \Hom_{J_\fs}\left(\ind_{J_LJ_\fs^+}^{J_\fs}\circ\infl_{J_L}^{J_LJ_\fs^+}\circ\Res^{\cK_L}_{J_L}(\tau_L ),\lambda_\fs\right).
\end{equation}
By Frobenius reciprocity, it equals to 
\begin{equation} \label{eqn:x=1}
\Hom_{J_LJ_\fs^+}\left(\infl_{J_L}^{J_LJ_\fs^+}\circ\Res^{\cK_L}_{J_L}(\tau_L ),\Res_{J_LJ_\fs^+}^{J_\fs}\lambda_\fs\right).
\end{equation}
On the other hand, \eqref{eqn:x=1} equals
\[
\Hom_{J_LJ_\fs^+}\left(\infl_{J_L}^{J_LJ_\fs^+}\circ\Res^{\cK_L}_{J_L}(\tau_L ),\infl_{J_L}^{J_LJ_\fs^+}\lambda_L\right).
\]
since $\Res_{J_L}^{J_\fs}\lambda_\fs=\lambda_L$ and $\lambda_\fs$ is trivial on $J_\fs^+$ (because $(J_\fs,\lambda_\fs)$ is a $G$-cover of $(J_L,\lambda_L)$). On the other hand, by using the Mackey formula we get
\[
\Res^{\cK_L}_{J_L}(\tau_L )=\Res^{\cK_L}_{J_L}(\ind_{J_L}^{\cK_L}\lambda_L )= \bigoplus_{z\in J_L\backslash \cK_L/ J_L} \ind_{J_L\cap J_L^z}^{J_L}\lambda_L^z,
\]
Hence we can deduce that 
\[
\Hom_{J_LJ_\fs^+}\left(\infl_{J_L}^{J_LJ_\fs^+}\circ\Res^{\cK_L}_{J_L}(\tau_L ),\infl_{J_L}^{J_LJ_\fs^+}\lambda_L\right)\neq 0,
\]
by looking at its summand corresponding to $z=1$: 
\[
\Hom_{J_LJ_\fs^+}\left(\infl_{J_L}^{J_LJ_\fs^+}\ind_{J_L}^{J_L}\lambda_L,\infl_{J_L}^{J_LJ_\fs^+}\lambda_L\right)
=\Hom_{J_LJ_\fs^+}\left(\infl_{J_L}^{J_LJ_\fs^+}\lambda_L,\infl_{J_L}^{J_LJ_\fs^+}\lambda_L\right)\neq 0,
\]
and thus the theorem follows.
\end{proof}

\end{document}